\documentclass[reqno, a4paper, 12pt]{amsart}
\usepackage{amsmath, amssymb,amsthm}
\pdfoutput=1
\usepackage{color}

\usepackage{xcolor}

\usepackage{txfonts}
\usepackage{enumerate}
\usepackage{caption}
\usepackage{accents}



\newtheorem{theorem}{Theorem}

\newtheorem{pseudotheorem}[theorem]{Formal Statement}

\newtheorem{lemma}[theorem]{Lemma}

{Important Convention}

\theoremstyle{remark}


  \newcommand{\F}{\mathcal{F}}

 \newcommand{\PP}{\mathbb{P}}

 \renewcommand{\phi}{\varphi}



\renewcommand{\P}{\mathbb{P}}

\newcommand{\Q}{\mathbb{Q}}
\newcommand{\R}{\mathbb{R}}




\newcommand{\de}{\mathrm{d}}

\DeclareMathOperator{\supp}{supp}
\newcommand{\bes}{\begin{subequations}}
\newcommand{\ees}{\end{subequations}}
\newcommand{\eea}{\end{eqnarray}}

\newcommand{\cF}{\mathcal{F}}

\newcommand{\NN}{{\mathbb N}}

\newcommand{\cal}{\mathcal}

\newcommand{\QQ}{{\mathbb Q}}

\usepackage{bbm}

\renewcommand{\epsilon}{\varepsilon}

\DeclareMathOperator{\proj}{proj}

\newcommand{\fourIdx}[5]{%
\setbox1=\hbox{\ensuremath{^{#1}}}%
 \setbox2=\hbox{\ensuremath{_{#2}}}%
 \setbox5=\hbox{\ensuremath{#5}}%
 \hspace{\ifnum\wd1>\wd2\wd1\else\wd2\fi}%
 \ensuremath{\copy5^{\hspace{-\wd1}\hspace{-\wd5}#1\hspace{\wd5}#3}%
 _{\hspace{-\wd2}\hspace{-\wd5}#2\hspace{\wd5}#4}%
 }}

\numberwithin{equation}{section}
\numberwithin{theorem}{section}

\renewcommand{\subset}{\subseteq}


\usepackage{subcaption}
\usepackage{graphicx}
\usepackage{todonotes}

\topmargin 0.56cm
\textheight 23.92cm 
\topmargin -0.0cm 
 \textheight 22.0cm

\oddsidemargin -0.cm \evensidemargin -0.cm
\textwidth 16cm

\newcommand{\cpl}{\text{cpl}}


\author{Julio Backhoff-Veraguas}
\author{Mathias Beiglb\"ock}
\author{Giovanni Conforti}

\title{A non-linear monotonicity principle and applications to Schr\"odinger-type problems}
\thanks{JB and MB acknowledge the Austrian Science Fund (FWF) for its support via the project Y00782. GC acknowledges funding from the grant SPOT (ANR-20-CE40-0014). }


\begin{document}
\maketitle

\begin{abstract}
A basic idea in optimal transport is that optimizers can be characterized through a geometric property of their support sets called \emph{cyclical monotonicity}. In recent years, similar \emph{monotonicity principles} have found applications in other fields where infinite-dimensional linear optimization problems play an important role. 

In this note, we observe how this approach can be transferred to \emph{non-linear} optimization problems. Specifically we establish a monotonicity principle that is applicable  to the Schr\"odinger problem and use it to characterize the structure of optimizers for target functionals beyond relative entropy. In contrast to classical convex duality approaches, a main novelty is that the monotonicity principle allows to deal also \emph{with non-convex functionals.}

%
 
\medskip \emph{keywords:} cyclical monotonicity,  monotonicity principle, Schr\"odinger problem, 
$L^2$ divergence,  non-linear optimization, 
\end{abstract}

\section{Introduction and  main results}

 \subsection{Motivation from optimal transport}
Given probabilities $\mu$ and $\nu$ on Polish 
spaces $X$ and  $Y$, and a cost function $c:X\times Y\to \R_+$, 
the Monge-Kan\-to\-ro\-vich problem
is to find a cost-minimizing transport plan. More precisely, 
writing $\cpl(\mu,\nu)$  for the set of all couplings (namely, measures) on
$X\times Y$ with $X$-marginal $\mu$ and $Y$-marginal $\nu$, 
the problem is to find 
\begin{equation}\label{G1}\tag{OT}
\textstyle
\inf \left\{ \int c\, \de\P:\P\in \cpl(\mu,\nu) \right\} \, 
\end{equation}
and to identify an optimal transport plan $\P^* \in \cpl(\mu,\nu)$.

The notion of  \emph{$c$-cyclical monotonicity} leads to a \emph{geometric characterization} of optimal couplings. 
Its relevance for \eqref{G1} has been highlighted by Gangbo and McCann \cite{GaMc96}, following earlier works of Knott
and Smith \cite{KnSm92} and R\"uschendorf \cite{Ru96} among others. 

We give here a slightly non-standard definition that  is not inherently tied to the transport problem  and serves our exposition more directly\footnote{{The arguments in \cite[Exercise 2.21, p.79]{Vi03} can be used to prove the equivalence with the  more familiar way of stating $c$-cyclical monotonicity of a set $\Gamma$ in the case of $c$ being the quadratic cost:  usually one requires that for any  $(x_{1}, y_{1}), \dots, (x_{n}, y_{n}) \in \Gamma$, $y_{n+1}=y_1$ it holds $
\sum_{i=1}^n c(x_{i}, y_{i}) \leq \sum_{i=1}^n c(x_{i}, y_{i+1})$. The argument when $c$ is general carry over verbatim.}}.

A set $\Gamma \subseteq X \times Y$ is  \emph{$c$-cyclically monotone} if any positive measure $\alpha$ that is finite and supported on finitely many points in $\Gamma$, is a cost-minimizing transport between its marginals. I.e., if $\alpha'$ has the same marginals as $\alpha$, then 
\begin{align*}
\textstyle
\int c \, \de \alpha \leq \int c \, \de \alpha'.
\end{align*}
A transport plan $\gamma$ is called $c$-cyclically monotone if it is concentrated on such a set $\Gamma$, i.e.\ if there is such a $\Gamma$ with $\gamma(\Gamma) = 1$.

The equivalence of
 optimality and $c$-cyclical monotonicity has been established under progressively milder regularity assumption. Based on 
\cite{AmPr03, Pr07, ScTe09,BeGoMaSc08, BiCa10} the following `\emph{Monotonicity Principle}' holds true: 
\begin{theorem}\label{LGTrans}
Let $c:X\times Y \to [0,\infty)$ be measurable and assume that $\P\in \cpl(\mu, \nu)$ is a transport plan with finite cost $\textstyle\int c\, \de\P \in \R_+$. Then $\P$ is optimal if and only if $\P$ is $c$-cyclically monotone.
\end{theorem}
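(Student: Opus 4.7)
My plan is to handle the two directions of the equivalence separately, using measure-theoretic techniques that do not rely on constructing a dual $c$-convex potential (since only measurability of $c$ is assumed).

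\emph{Sufficiency ($c$-cyclical monotonicity $\Rightarrow$ optimality).} I would fix a $c$-cyclically monotone Borel set $\Gamma$ with $\P(\Gamma)=1$, and an arbitrary competitor $\P' \in \cpl(\mu,\nu)$ (which I may assume has finite cost, else the inequality is trivial). Set $\eta := \P \wedge \P'$ and $\tilde\P := \P - \eta$, $\tilde\P' := \P' - \eta$, so that $\tilde\P$ and $\tilde\P'$ are mutually singular finite (sub-probability) measures with identical $X$- and $Y$-marginals, and $\tilde\P$ still lives on $\Gamma$. It suffices to show $\int c\, \de\tilde\P \leq \int c\, \de\tilde\P'$. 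The plan is to approximate $\tilde\P$ in total variation by finite atomic measures $\alpha_n$ supported in $\Gamma$, and then for each $n$ build an auxiliary finite atomic measure $\beta_n$ with the same marginals as $\alpha_n$ and with $\int c\, \de\beta_n$ bounded by (and in the limit approaching) $\int c\, \de\tilde\P'$. Applying the defining property of $\Gamma$ to each $\alpha_n$ gives $\int c\, \de\alpha_n \leq \int c\, \de\beta_n$, and passing to the limit yields the desired inequality.

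\emph{Necessity (optimality $\Rightarrow$ $c$-cyclical monotonicity).} Given optimal $\P$, I would exhibit a $c$-cyclically monotone Borel set $\Gamma$ with $\P(\Gamma)=1$ via a measurable exhaustion. The idea is to argue by contradiction: if no such $\Gamma$ existed, then for some integer $n$ the set of $n$-tuples $\bigl((x_1,y_1),\dots,(x_n,y_n)\bigr)$ along which a strict cost-improving rearrangement exists would have positive $\P^{\otimes n}$-measure. Via a measurable selection theorem of Jankov--von Neumann type one could then pick such rearrangements in a Borel-measurable way, and paste them back into a new element of $\cpl(\mu,\nu)$ with strictly smaller total cost, contradicting optimality of $\P$.

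The main obstacle lies in the sufficiency direction, namely producing the auxiliary atomic measures $\beta_n$ with marginals exactly matching those of $\alpha_n$ and with costs converging to $\int c \, \de \tilde \P'$; this typically requires disintegrating $\tilde \P'$ against the atomic marginals of $\alpha_n$ and invoking a Birkhoff-type rounding to generate the required atomic couplings, all while controlling the cost in $L^1(\tilde\P')$. A secondary delicate point, in the necessity direction, is to verify that the measurable selection can indeed be made so that the pasted modification remains in $\cpl(\mu,\nu)$ and genuinely lowers the cost of $\P$; this is where the cited refinements (\cite{BeGoMaSc08,BiCa10}) do the real work of weakening the regularity hypotheses on $c$ to mere measurability.
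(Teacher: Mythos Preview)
The paper does not actually prove Theorem~\ref{LGTrans}; it is quoted as a known result assembled from \cite{AmPr03, Pr07, ScTe09, BeGoMaSc08, BiCa10}, so there is no ``paper's own proof'' to compare against. Your necessity direction is the standard argument and is fine as a plan.

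Your sufficiency plan, however, has a genuine gap. You want to approximate $\tilde\P$ in total variation by atomic $\alpha_n$ on $\Gamma$, and then manufacture atomic $\beta_n$ with \emph{exactly} the marginals of $\alpha_n$ but $\int c\,\de\beta_n \to \int c\,\de\tilde\P'$. With $c$ merely measurable there is no mechanism to do this: the marginals of $\alpha_n$ are specific finite atomic measures, and nothing forces $\tilde\P'$ to place cheap mass over those particular atoms. ``Disintegrating $\tilde\P'$ against the atomic marginals of $\alpha_n$'' is not meaningful when those marginals are discrete and $\tilde\P'$ is diffuse, and a Birkhoff rounding only helps once you already have a doubly-stochastic array with the right row/column sums and controlled cost---which is exactly what is missing. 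In short, you are trying to pass a pointwise (finite-support) optimality statement to the limit through an $L^1$ approximation of measures, but the cost functional $\QQ\mapsto\int c\,\de\QQ$ is not continuous for the relevant topology when $c$ is only measurable, so the limit step fails.

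The proofs in the cited references proceed very differently: they do not approximate by atomic measures at all, but instead build Borel ``Kantorovich potentials'' $\phi,\psi$ with $\phi(x)+\psi(y)\leq c(x,y)$ everywhere and equality $\P$-a.s., using a Rockafellar-type explicit formula along finite chains in $\Gamma$ (this is where the connectedness lemma \cite[Lemma~4.3]{BeGoMaSc08} and the analysis in \cite{ScTe09, BiCa10} enter). Optimality then follows by the usual duality sandwich $\int c\,\de\P=\int\phi\,\de\mu+\int\psi\,\de\nu\leq\int c\,\de\P'$. If you want to salvage your write-up, replace the atomic approximation scheme by this potential construction; the argument you sketched would only go through under additional regularity (e.g.\ lower semicontinuous $c$ and tight marginals), not under mere measurability.
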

The importance of this result stems from the observation that it is often an elementary  and feasible task to see whether a transport behaves optimally on a  finite number of points. But  this would be a priori of no help for a problem where single points do not carry positive mass.  Theorem \ref{LGTrans}  provides the required remedy to this obstacle as it establishes  the connection to optimality on a ``pointwise'' level.

 \subsection{Recent developments and aims of this article}
 
 More recently, variants of this `monotonicity principle' have been applied in transport problems for finitely or infinitely many marginals \cite{Pa12fm, CoDeDi15, Gr16a, BeGr14, Za14}, the martingale version of the optimal transport problem \cite{BeJu16, NuSt16, BeNuTo16}, stochastic portfolio theory \cite{PaWo16}, the Skorokhod embedding problem \cite{BeCoHu14,GuTaTo15b}, the distribution constrained optimal stopping problem \cite{BeEdElSc16, BeNuSt20} and the weak transport problem \cite{GoJu18,BaBeHuKa20, BaBePa18}.

What all these articles have in common is that the original idea is applied to other infinite-dimensional \emph{linear} optimization problems. In the present note, we advertise the idea that this optimality principle can be useful beyond linear problems and in fact to problems that are not susceptible to a convex duality approach. Given the versatile applicability of the idea in various linear optimization problems, the extension to non-linear problems appears highly promising. 

In Section \ref{FormalSection} we present the principal idea of what kind of structure such a monotonicity principle might take in applications to non-linear optimization problems. While the heuristic derivation in  Section \ref{FormalSection} is based on a purely formal linearization procedure, we rigorously establish this result in Section \ref{SectionIntermediate} for a large subclass of non-linear problems. We then further specify this rigorous  monotonicity principle in the setup of a general and not necessarily convex version of the Schr\"odinger problem: In Theorem \ref{StructureOfOptimizers1stCase} we show how this non-linear monotonicity principle can be used to obtain necessary optimality conditions, which are shown to be also sufficient for convex problems such as the classical Schr\"odinger problem, see Theorem \ref{converse}. Furthermore we derive novel variants of these conditions for more general entropy functionals in Theorem \ref{StructureOfOptimizers2ndCase}. 

To illustrate the potential of our approach, we apply our results to obtain a shape theorem for the optimal solutions of a non-convex Schr\"odinger problem with congestion. { Furthermore, we discuss briefly how a natural generalization of our findings, which we plan to address in future works, would allow to advance considerably the understanding of the recently introduced mean field Schr\"odinger problem \cite{backhoff2020mean}.} 


\subsection{A `formal' non-linear monotonicity principle}\label{FormalSection}\label{SchroedingerSection}
In this section we introduce some notation and then state a non-linear monotonicity principle which is `formal' in the sense that we do not give a rigorous proof or precise conditions under which it is expected to hold. In the next section we will then provide a rigorous version which is applicable to the Schr\"odinger problem and similar energy minimization problems. 

Let $\Omega$ be a Polish space with $\cal B$ its Borel sigma-algebra. Consider $\F$ a family of real-valued functions on $\Omega$. We suppose either of the following:

\begin{enumerate}
\item $\F$ is a subset of $C_b(\Omega) $, the space of continuous bounded functions.
\item $\F$ is a countable sub-family of $B_b(\Omega)$, the space of Borel bounded functions.
\end{enumerate}

We are given a functional $$G:{\cal P}(\Omega)\to [0,+\infty],$$
and we are interested in the following problem
\begin{align}
\label{P measures}\tag{P}
\inf\left\{ G(\QQ)\,:\,\QQ\in\mathcal{P}(\Omega),\, \QQ\in \mathrm{Adm}\right \},
\end{align}
where
$$\textstyle\mathrm{Adm}:=\mathrm{Adm}_\F := \left\{\QQ:\int f\de\QQ =0,\,\forall f \in\F\right\}, $$
{and $\mathcal{P}(\Omega)$ denotes the set of Borel probability measures on $\Omega$. }

The standing assumption on $G$ is that 
there exist directional derivatives with representation via functions, i.e. for any $\QQ$ in the domain $D(G)=\{\QQ\in \mathcal P(\Omega): G(\QQ)< \infty\}$ there exists $\delta G_{\QQ}:\Omega\to (-\infty, \infty]$ measurable such that
$$\textstyle\forall \bar\QQ\in D(G), \quad\lim_{\epsilon\searrow 0}\frac{G(\QQ+\epsilon[\bar{\QQ}-\QQ])-G(\QQ)}{\epsilon} = \int_\Omega \delta G_{\QQ}(\omega)[\bar{\QQ}-\QQ](\de\omega),$$
where one implicitly assumed the limit to exist for all $\QQ, \bar{\QQ}\in D(G)$.



Positive finite measures $\alpha,\alpha'$ with equal mass and finite support are called \emph{competitors} if  $$\textstyle \int f\,\de(\alpha-\alpha')=0,\,\forall f\in\F.$$ 
We then expect the following:

\begin{pseudotheorem}[Non-Linear Monotonicity Principle, formal version]\label{Thm monotonicity}
Suppose $\QQ^*\in \mathrm{Adm}\cap  D(G)$ is an optimizer for Problem \eqref{P measures}. Then
\begin{enumerate}
\item $\QQ^*$ is a minimum of the \emph{linearized} problem
$$\textstyle\inf\left\{ \int_\Omega c(\omega)\, \QQ(\de\omega)\,:\,\QQ\in\mathrm{Adm} \cap D(G)\right\}, \text{ where } c(\omega) :=\delta G_{\QQ^*}(\omega), $$
\item  
There exists a Borel set $\Gamma_{\Q^*} \subset \Omega$  such that $\Q^*(\Gamma_{\Q^*})=1$ having the following property: given competitors $\alpha, \bar \alpha$, with $\supp \alpha\subseteq \Gamma_{\Q^*}$ we have
$$\textstyle \int\delta G_{\Q^*}\de\alpha\leq \int\delta G_{\QQ^*}\de\bar\alpha. $$
\end{enumerate}
\end{pseudotheorem}

\begin{proof}[Formal derivation]
By optimality of $\QQ^*$ and the fact that $\mathrm{Adm}$ is convex, we easily obtain $$\textstyle\lim_{\epsilon\searrow 0}\frac{G(\QQ^*+\epsilon[\bar{\QQ}-\QQ^*])-G(\QQ^*)}{\epsilon} = \int_\Omega\delta G_{\QQ^*}(\omega)[\bar{\QQ}-\QQ^*](\de\omega)\geq 0,$$
for all $\bar{\QQ}\in \mathrm{Adm}\cap  D(G)$, showing that $\QQ^*$ is a minimum of
the linearized problem in (1). The  monotonicity principle in \cite[Theorem 1.4]{BeGr14} applies, and we find exactly the desired condition in (2).
\end{proof}

\subsection{A Rigorous Non-Linear Monotonicity Principle }
\label{SectionIntermediate}

We consider throughout a continuous function $h:\R_+\to\R_+$ satisfying at least:
\begin{align}\label{eq:assumpt_h}\tag{H}
h\text{ is differentiable on $(0,\infty)$ and the limit $h'(0):=\lim_{x\searrow 0}h'(x)$ exists.}
\end{align}
Throughout we fix $\PP\in\mathcal{P}(\Omega)$ and consider
$$G(\QQ) \,:=G_h(\QQ):= \,\left \{
\begin{array}{ll}\textstyle
\int_\Omega h\left (  \frac{\de\QQ}{\de\PP}(\omega) \right )\PP(\de\omega) &, \text{ if }{\mathcal P(\Omega)\ni}\QQ\ll\PP,\\
+\infty&, \text{otherwise}.
\end{array}
\right. 
$$
and the associated minimization problem
\begin{align}
\label{Ph measures}\tag{P${}_h$}
\inf\left\{ G_h(\QQ)\,:\,\QQ\in\mathcal{P}(\Omega),\, \QQ\in \mathrm{Adm}\right \}.
\end{align}

\begin{lemma}[Non-Linear Monotonicity Principle]\label{SpecialMP}
{In addition to \eqref{eq:assumpt_h},} suppose that $h$ is twice differentiable on $\R_+$ with $h''\geq C$ everywhere for some $C\in \R$ and that  $\lim_{x\rightarrow +\infty} h'(x)=+\infty$. Furthermore, assume that either $h'$ is lower bounded or 
 $\lim_{x\downarrow 0}h'(x)=-\infty$ and let $\QQ^*$ be an optimizer of Problem \eqref{Ph measures}. Then there exist sets $\Gamma_{\QQ^*}, \Gamma_\PP$ such that $\PP(\Gamma_\PP)=\QQ^*(\Gamma_{\QQ^*})=1$ and for all competitors $\alpha, \alpha'$ with $\supp(\alpha)\subseteq \Gamma_{\QQ^*}, \supp(\alpha')\subseteq \Gamma_\PP$ we have
\begin{align}\label{MonotonicityCond}
   \textstyle \int h'\Big(\frac{\de\QQ^*}{\de\PP}\Big)\, \de\alpha\leq \int h'\Big(\frac{\de\QQ^*}{\de\PP}\Big)\, \de\alpha'.
\end{align}
\end{lemma}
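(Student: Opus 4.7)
The plan is to make the formal derivation of Formal Statement~\ref{Thm monotonicity} rigorous in this concrete convex-entropy setting: I linearize $G_h$ at the optimizer $\QQ^*$ --- producing a linear problem with cost $c := h'\bigl(\de\QQ^*/\de\PP\bigr)$ --- and then invoke the classical monotonicity principle \cite[Theorem 1.4]{BeGr14} applied to this linearized problem. First I fix a Borel representative $f := \de\QQ^*/\de\PP$, set $c := h' \circ f$ using the convention $h'(0) := \lim_{x \downarrow 0} h'(x)$ from \eqref{eq:assumpt_h}, and choose $\Gamma_\PP := \{f < \infty\}$ in the ``$h'$ lower bounded'' case and $\Gamma_\PP := \{0 < f < \infty\}$ otherwise (in the second case, a standard variational argument --- perturbing mass from $\{f > 0\}$ toward $\{f = 0\}$ would create an infinite negative derivative of $G_h$, impossible at an optimizer --- forces $\PP(\{f = 0\}) = 0$). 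Set $\Gamma_{\QQ^*} := \Gamma_\PP \cap \{f > 0\}$; then $\PP(\Gamma_\PP) = \QQ^*(\Gamma_{\QQ^*}) = 1$, $c$ is real-valued Borel on $\Gamma_\PP$, and every $\QQ \in D(G_h)$ is concentrated on $\Gamma_\PP$ since $\QQ \ll \PP$.

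The crux is the rigorous linearization. For any $\bar\QQ \in \mathrm{Adm} \cap D(G_h)$ with density $\bar f$, let $\QQ_\eps := (1-\eps)\QQ^* + \eps\bar\QQ$ and $f_\eps := (1-\eps)f + \eps\bar f$. Introducing $\tilde h(x) := h(x) - \tfrac{C}{2}x^2$, which is convex thanks to $h'' \geq C$, one has by convexity the monotone-in-$\eps$ bound
\[
\frac{\tilde h(f_\eps) - \tilde h(f)}{\eps} \leq \tilde h(\bar f) - \tilde h(f),
\]
while the quadratic correction $[f_\eps^2 - f^2]/\eps = 2f(\bar f - f) + \eps(\bar f - f)^2$ is explicit and controllable. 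Monotone/dominated convergence then gives
\[
\lim_{\eps \downarrow 0}\frac{G_h(\QQ_\eps) - G_h(\QQ^*)}{\eps} = \int h'(f)\,(\bar f - f)\,\de\PP = \int c\,\de\bar\QQ - \int c\,\de\QQ^*,
\]
which is $\geq 0$ by optimality of $\QQ^*$; hence $\QQ^*$ solves the linear problem $\inf_{\mathrm{Adm} \cap D(G_h)} \int c\,\de\QQ$.

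Now, regarding $\Gamma_\PP$ as a Polish subspace on which the linearized problem naturally lives (all competitors must be $\ll\PP$, forcing supports inside $\Gamma_\PP$), I apply \cite[Theorem 1.4]{BeGr14} with cost $c$ and constraint family $\F$ to extract a Borel set $\Gamma \subseteq \Gamma_{\QQ^*}$ with $\QQ^*(\Gamma) = 1$ on which $c$ is cyclically monotone in the finite-competitor sense; this yields \eqref{MonotonicityCond} for $\supp(\alpha) \subseteq \Gamma$ and $\supp(\alpha') \subseteq \Gamma_\PP$, and renaming $\Gamma$ as $\Gamma_{\QQ^*}$ finishes the proof. The main technical obstacle I foresee is the domination in the linearization step: when $h'$ is unbounded near $0$ or $\infty$ the difference quotient $[h(f_\eps) - h(f)]/\eps$ has no obvious uniform $L^1(\PP)$ majorant, and the case dichotomy in the hypothesis --- $h'$ lower bounded versus $h'(0^+) = -\infty$ --- is exactly what allows the semi-convexity correction via $\tilde h$ to succeed in both regimes.
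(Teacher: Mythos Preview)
Your approach is exactly the formal derivation the paper sketches in Section~\ref{FormalSection}, and the paper explicitly flags that derivation as non-rigorous. Two concrete gaps prevent it from going through.

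First, the assertion $\PP(\{f=0\})=0$ when $h'(0^+)=-\infty$ is false in general. Your ``standard variational argument'' would move mass from $\{f>0\}$ toward $\{f=0\}$, but such a perturbation must stay in $\mathrm{Adm}$, and the constraint family $\F$ can forbid it entirely: take $\Omega=\{a,b\}$, $\PP$ uniform, $\F=\{\I_{\{a\}}-1\}$, so that $\mathrm{Adm}=\{\delta_a\}$; then $\QQ^*=\delta_a$, $f(b)=0$, and $\PP(\{f=0\})=1/2$. The equivalence $\QQ^*\sim\PP$ is only established later (Theorem~\ref{StructureOfOptimizers1stCase}), and there it relies on the additional structural hypothesis $\PP\sim\mu\otimes\nu$.

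Second, and more fundamentally, your linearization only yields optimality of $\QQ^*$ for the cost $c=h'(f)$ over $\mathrm{Adm}\cap D(G_h)$, i.e.\ over measures $\QQ\ll\PP$ with $G_h(\QQ)<\infty$. The monotonicity principle of \cite{BeGr14} delivers the finite-competitor conclusion from optimality over \emph{all} of $\mathrm{Adm}$; the competitors $\alpha'$ in~\eqref{MonotonicityCond} are atomic and hence typically singular with respect to $\PP$, so they lie outside $D(G_h)$, and nothing in your argument bridges that gap. (Relatedly, the semi-convexity trick via $\tilde h=h-\tfrac{C}{2}x^2$ needs $f,\bar f\in L^2(\PP)$ to control the quadratic remainder, which is nowhere assumed, so even the linearized value $\int c\,\de\QQ^*$ may fail to be finite.)

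The paper's proof avoids both issues by \emph{not} passing through a global linear problem. It argues directly by contradiction: if~\eqref{MonotonicityCond} fails (with the truncation $-l\le c\le l$ on the supports of $\alpha,\alpha'$), the Kellerer dichotomy (Lemma~\ref{KellLemma}) together with a Jankow--von~Neumann selection produces \emph{absolutely continuous} measures $\theta\le\QQ^*$, $\theta'\le\PP$ in the same competitor relation with $\int c\,\de(\theta'-\theta)<0$. Lemma~\ref{GradToSign} then shows that $\QQ^*+\varepsilon(\theta'-\theta)\in\mathrm{Adm}$ strictly improves $G_h$ for small $\varepsilon$, contradicting optimality of $\QQ^*$ in the \emph{original nonlinear} problem. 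The domination $\theta\le\QQ^*$, $\theta'\le\PP$ and the bound $|c|\le l$ on the relevant supports are precisely what make the first-order expansion in Lemma~\ref{GradToSign} rigorous without any global integrability hypothesis.
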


{
We defer the proof of the above lemma to Section \ref{sec:proof_lem_SpecialMP}. Typical examples of $h$ satisfying the above conditions are $h(x)=x\log x - x + 1$ or $h(x)=x^p$ with $p>1$.}

\subsection{Schr\"odinger-type Problems}
\label{SchroedingerSection}
We specify the setting of Section \ref{SectionIntermediate}. In this part we are interested in the case $$\Omega:={X}\times {Y},$$
for ${X}, {Y}$ Polish spaces. As for the constraints set $\F$, we are interested in 
$${
\textstyle \F_\mu:=\{\bar f(x,y)=f(x)-\int_{{X}}f\, \de\mu:\,f \in C_b({X})\} \quad \F_\nu:=\{\bar g(x,y)=g(y)-\int_{{Y}}g\, \de\nu:\,g \in C_b({Y})\},}$$
and 
$$\F:=\F_{\mu,\nu}:=\F_\mu \cup \F_\nu,$$
for given probability measures
$\mu\in\mathcal{P}({X}), \nu\in\mathcal{P}({Y}),$  satisfying
$$\mu\ll \text{proj}^\mathcal{X}(\PP) \text{    and   } \nu\ll \text{proj}^\mathcal{Y}(\PP). $$
With these specifications, our minimization problem (Problem \eqref{Ph measures}) clearly becomes:
\begin{align}\label{P regularized transport}\textstyle
\inf\left\{\int_{{X}\times {Y}} h\left (  \frac{\de\QQ}{\de\PP}(x,y) \right )\PP(\de x,\de y): \QQ\in \cpl(\mu, \nu)  \right\}.
\end{align}

 Notice that for the choice $h(x)= x \log(x)$, Problem \eqref{Ph measures} becomes the classical Schr\"odinger problem\footnote{See L\'eonard's  survey \cite{Le14} on classical results around the Schr\"odinger problem and its probabilistic meaning. Recently this problem has seen a surge in interest owing to the overture to machine learning by Cuturi \cite{Cu13}.}.
   


We now rigorously derive necessary optimality conditions for Problem \eqref{P regularized transport}. The functions  $\phi$ and $\psi$ appearing in Theorem \ref{StructureOfOptimizers1stCase} can formally be seen as Lagrange multipliers and in the case $h(x)=x\log(x)$ they are known as Schr\"odinger potentials, see \cite[Sec 2.]{Le14}. {We remind the reader that $\rho\sim\eta$ stands for equivalence of measures in the sense that $\rho\ll\eta$ and $\eta\ll\rho$.}

\begin{theorem}\label{StructureOfOptimizers1stCase}
Assume that Problem \eqref{P regularized transport} is finite and $\QQ^*$ is an optimizer thereof. Importantly we also assume that  $\PP \sim \mu\otimes \nu$. Let $h:[0,\infty) \to (-\infty, \infty)$ be  twice continuously differentiable, $\lim_{x\to0}h'(x)=-\infty$, $\lim_{x\to + \infty} h'(x)=+\infty$ and $\inf_{\R_+} h''>-\infty$. Then $\QQ^*\sim \PP$ and there exist  measurable functions $\phi:{X}\to [-\infty,+\infty)$ and $\psi:{Y}\to [-\infty,+\infty)$ such that
\begin{equation}\label{eq:Kantorovich_potentials}
  \textstyle  h'\circ \frac{\de\QQ^*}{\de\PP}(x,y) =  \phi(x)+\psi(y) ,\,\,\, \PP-a.s.
\end{equation}
\end{theorem}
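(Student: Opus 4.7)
The plan is to apply Lemma \ref{SpecialMP} with the family $\F=\F_{\mu,\nu}$ and then to distil the additive form of $c:=h'\circ \tfrac{\de\QQ^*}{\de\PP}$ (extended by $-\infty$ on $\{\de\QQ^*/\de\PP=0\}$) from the resulting two-point cyclical monotonicity. The hypotheses of the lemma are met: $h\in C^2(\R_+)$, $\inf h''>-\infty$, $h'(+\infty)=+\infty$, and $h'(0^+)=-\infty$. Unwinding $\F_{\mu,\nu}$, its competitors are precisely finite positive measures of equal total mass sharing the same $\X$- and $\Y$-marginals. Lemma \ref{SpecialMP} thus delivers Borel sets $\Gamma_{\QQ^*}$ (taken inside $\{\de\QQ^*/\de\PP>0\}$) and $\Gamma_\PP$, of full $\QQ^*$- respectively $\PP$-measure, such that for every quadruple with $(x_0,y_0),(x_1,y_1)\in\Gamma_{\QQ^*}$ and $(x_0,y_1),(x_1,y_0)\in\Gamma_\PP$ one has
\begin{equation*}
c(x_0,y_0)+c(x_1,y_1)\,\leq\, c(x_0,y_1)+c(x_1,y_0).
\end{equation*}

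The first substantive step is to promote $\QQ^*\ll \PP$ to $\QQ^*\sim\PP$. Suppose, for contradiction, that $N:=\{\de\QQ^*/\de\PP=0\}$ has $\PP(N)>0$; since $\PP\sim\mu\otimes\nu$ this yields $\mu\otimes\nu(N\cap\Gamma_\PP)>0$. Sample $(X_0,Y_0)$ and $(X_1,Y_1)$ independently from $\QQ^*$; the cross-pair $(X_0,Y_1)$ then has law $\mu\otimes\nu$, and likewise $(X_1,Y_0)$. With positive $\QQ^*\otimes\QQ^*$-probability we therefore simultaneously have $(X_0,Y_0),(X_1,Y_1)\in\Gamma_{\QQ^*}$, $(X_0,Y_1)\in N\cap\Gamma_\PP$ and $(X_1,Y_0)\in\Gamma_\PP$. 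For such a realisation the left-hand side of the monotonicity inequality is finite while the right-hand side equals $-\infty$, a contradiction. Hence $\PP(N)=0$, i.e., $\QQ^*\sim\PP$, and $c$ is real-valued $\PP$-a.s.

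Once $\QQ^*\sim\PP$, the set $\Gamma:=\Gamma_{\QQ^*}\cap\Gamma_\PP$ has full measure under $\QQ^*$, $\PP$ and $\mu\otimes\nu$. For any quadruple entirely inside $\Gamma$, both pairs lie in $\Gamma_{\QQ^*}\cap\Gamma_\PP$, so swapping their roles in the monotonicity inequality yields the reverse inequality, and therefore equality
\begin{equation*}
c(x_0,y_0)+c(x_1,y_1)=c(x_0,y_1)+c(x_1,y_0).
\end{equation*}
A Fubini selection furnishes a point $(x_0,y_0)\in\Gamma$ whose slices $\Gamma_{x_0}:=\{y:(x_0,y)\in\Gamma\}$ and $\Gamma^{y_0}:=\{x:(x,y_0)\in\Gamma\}$ are $\nu$- respectively $\mu$-conull. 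Setting $\phi(x):=c(x,y_0)$ and $\psi(y):=c(x_0,y)-c(x_0,y_0)$, extended by $-\infty$ outside those slices, produces measurable functions (sections of the jointly measurable $c$) for which the displayed equality rewrites as $c(x,y)=\phi(x)+\psi(y)$ at $\mu\otimes\nu$-a.e., hence $\PP$-a.e., point $(x,y)$.

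I expect the main obstacle to be the second step: a priori $\Gamma_{\QQ^*}$ is only $\QQ^*$-conull and could sit on a $\mu\otimes\nu$-null set, so the sampling argument exploiting that the cross-pair of an independent $\QQ^*\otimes\QQ^*$ sample is automatically $\mu\otimes\nu$-distributed is what bridges the \emph{integral} monotonicity statement and the \emph{pointwise} equivalence $\QQ^*\sim\PP$. The third step is then a largely standard two-cyclic monotonicity calculation, made clean by having brought everything onto the common full-measure set $\Gamma$.
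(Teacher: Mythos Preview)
Your argument is correct and takes a genuinely different route from the paper's. Both proofs start from Lemma~\ref{SpecialMP}, but thereafter diverge.

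For the equivalence $\QQ^*\sim\PP$, the paper invokes a connectedness lemma \cite[Lemma 4.3]{BeGoMaSc08} to build $(\Gamma_{\QQ^*},\Gamma_\PP)$-paths of arbitrary length between points and then feeds these paths into the full $N$-cycle inequality \eqref{Cmon} to force $h'\circ d>-\infty$ pointwise on $\Gamma_\PP$. You bypass this machinery entirely: by sampling $(X_0,Y_0),(X_1,Y_1)$ i.i.d.\ from $\QQ^*$, the cross pair $(X_0,Y_1)$ automatically has law $\mu\otimes\nu\sim\PP$, so a single $2$-cycle already hits $\{d=0\}\cap\Gamma_\PP$ with positive probability and produces the contradiction. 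This is slicker and avoids any external path-connectedness input.

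For the additive splitting, the paper argues via $N$-loops that $h'\circ d$ satisfies the balance condition of \cite[Prop.~1]{Mi06b}, obtains $\phi,\psi$ from that result, and then recovers Borel measurability through a separate inf/sup representation of $\phi$. You instead use that, once $\QQ^*\sim\PP$, the set $\Gamma=\Gamma_{\QQ^*}\cap\Gamma_\PP$ is $\mu\otimes\nu$-full; swapping the roles of the two pairs in the $2$-cycle upgrades the inequality to an equality on $\Gamma^4$-quadruples, and a Fubini choice of base point $(x_0,y_0)$ lets you write $\phi(x)=c(x,y_0)$, $\psi(y)=c(x_0,y)-c(x_0,y_0)$ directly as Borel sections. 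This is more elementary and handles measurability for free.

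What the paper's approach buys in return is closer alignment with the general $c$-cyclical monotonicity framework (longer cycles, Rockafellar-type potentials), which may transfer more readily to settings where the product structure $\PP\sim\mu\otimes\nu$ is weakened. In the present setting, however, your $2$-cycle plus Fubini argument is both shorter and self-contained.
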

It is worth remarking that the above theorem applies to $h(x)=x \log (x)$ (where $h'(x)=1+\log(x)$) but not to $h(x)=x^2$ (where $h'(x)=2x$). This latter case (and similar ones) is covered by the following complementary theorem:

\begin{theorem}\label{StructureOfOptimizers2ndCase}
Assume that Problem \eqref{P regularized transport} is finite and $\QQ^*$ is an optimizer thereof. Assume that  $\PP \sim \mu\otimes \nu$.
Let $h:[0,\infty) \to (-\infty, \infty)$ be strictly increasing, continuously differentiable, $\lim_{x\to 0}h'(x)=0$, $\lim_{x\to +\infty} h'(x)=+\infty$, $\inf_{\mathbb R_+}  h''>-\infty$. Then there exist  measurable functions $\phi:{X}\to [-\infty,+\infty)$ and $\psi:{Y}\to [-\infty,+\infty)$ such that
\begin{align}\textstyle h'\circ\frac{\de\QQ^*}{\de\PP}(x,y) =  (\phi(x)+\psi(y))_+ ,\,\,\, \PP-\mbox{a.s.} \label{eq:Kantorovich_potentials2}\end{align}
\end{theorem}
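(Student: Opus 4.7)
The plan is to derive the result from Lemma~\ref{SpecialMP} via the same Rockafellar--R\"uschendorf scheme that underlies Theorem~\ref{StructureOfOptimizers1stCase}; the only genuinely new feature is that here $h'(0)=0$ rather than $-\infty$, and this is precisely what produces the positive part. The hypotheses of Lemma~\ref{SpecialMP} are met: $h$ is twice differentiable with $\inf h''>-\infty$, $\lim_{x\to\infty}h'(x)=\infty$, and $h'\geq 0$ (so $h'$ is lower bounded as required). Setting $c(x,y):=h'\bigl(\tfrac{\de\QQ^*}{\de\PP}(x,y)\bigr)$, Lemma~\ref{SpecialMP} yields Borel sets $\Gamma_{\QQ^*}$ and $\Gamma_\PP$ of full $\QQ^*$- respectively $\PP$-measure on which \eqref{MonotonicityCond} holds for all admissible competitors $\alpha,\alpha'$.

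Because $\PP\sim\mu\otimes\nu$, I would first carve out of $\Gamma_\PP$ a rectangle $\Gamma_\mu\times\Gamma_\nu$ with $\mu(\Gamma_\mu)=\nu(\Gamma_\nu)=1$ and replace $\Gamma_{\QQ^*}$ by its intersection with that rectangle, which remains $\QQ^*$-full since $\QQ^*\in\cpl(\mu,\nu)$. For any $(x_1,y_1),\dots,(x_n,y_n)\in\Gamma_{\QQ^*}$, the cyclically swapped points $(x_i,y_{i-1})$ (indices mod $n$) then belong to $\Gamma_\mu\times\Gamma_\nu\subset\Gamma_\PP$, so the discrete competitors $\alpha=\tfrac1n\sum_i\delta_{(x_i,y_i)}$ and $\alpha'=\tfrac1n\sum_i\delta_{(x_i,y_{i-1})}$ are both admissible in \eqref{MonotonicityCond}. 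This recovers the classical $c$-cyclical monotonicity of $\Gamma_{\QQ^*}$, from which the standard measurable Rockafellar/R\"uschendorf decomposition produces Borel functions $\phi:\mathcal{X}\to[-\infty,\infty)$ and $\psi:\mathcal{Y}\to[-\infty,\infty)$ with $\phi(x)+\psi(y)\leq c(x,y)$ on $\Gamma_\mu\times\Gamma_\nu$ and equality on $\Gamma_{\QQ^*}$, exactly as in the proof of Theorem~\ref{StructureOfOptimizers1stCase}.

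The positive part now drops out from the sign of $c$. Since $h$ is strictly increasing with $h'(0)=0$, we have $c\geq 0$ on $\mathcal{X}\times\mathcal{Y}$. On $\Gamma_{\QQ^*}$ this gives $\phi(x)+\psi(y)=c(x,y)\geq 0$, and hence $(\phi(x)+\psi(y))_+=c(x,y)$. On $(\Gamma_\mu\times\Gamma_\nu)\setminus\Gamma_{\QQ^*}$ the density $\tfrac{\de\QQ^*}{\de\PP}$ vanishes $\PP$-a.s.\ (this set is $\QQ^*$-null and lies in $\Gamma_\PP$), so $c=h'(0)=0$ there, while simultaneously $\phi(x)+\psi(y)\leq 0$; therefore $(\phi(x)+\psi(y))_+=0=c(x,y)$ on this region as well. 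Combining the two regions yields \eqref{eq:Kantorovich_potentials2}.

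The main obstacle I anticipate is not conceptual but one of measurability bookkeeping: choosing a Borel representative of the $\PP$-a.e.\ defined function $c$ and setting up the chain-supremum definition of $\phi,\psi$ so that the resulting potentials are Borel and the key inequalities hold pointwise on the chosen full-measure rectangle rather than only modulo $\PP$-null sets. Since precisely this step has to be carried out already in the proof of Theorem~\ref{StructureOfOptimizers1stCase}, the present theorem essentially reduces to re-running that argument under the weaker boundary behaviour $h'(0)=0$ and reading off the truncation at zero from the non-negativity of $c$.
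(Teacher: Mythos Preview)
Your overall route---apply Lemma~\ref{SpecialMP}, extract $c$-cyclical monotonicity, run a R\"uschendorf-type potential construction, then read off the positive part from $c\geq 0$---matches the paper's, and your final paragraph deriving $(\phi+\psi)_+=c$ from the sign of $h'$ is correct and essentially identical to the paper's concluding step.

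There is, however, a genuine gap. You assert that since $\PP\sim\mu\otimes\nu$ one can ``carve out of $\Gamma_\PP$ a rectangle $\Gamma_\mu\times\Gamma_\nu$'' of full measure, so that every cyclically swapped pair $(x_i,y_{i-1})$ automatically lands in $\Gamma_\PP$. This is false in general: a set of full product measure need not contain any full-measure rectangle (take $[0,1]^2$ minus the diagonal under Lebesgue measure---any $A,B\subseteq[0,1]$ of full measure must intersect, and for $x\in A\cap B$ the point $(x,x)$ lies outside the set). Without the rectangle, Lemma~\ref{SpecialMP} does not hand you cyclical monotonicity of $\Gamma_{\QQ^*}$ with respect to the finite Borel cost $c=h'\circ d$, and the Rockafellar/R\"uschendorf machinery cannot be invoked as you state it. This is more than ``measurability bookkeeping''.

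The paper repairs this differently: it redefines $d:=+\infty$ exactly on $(\mathcal X\times\mathcal Y)\setminus\Gamma_\PP$, so that $h'\circ d=+\infty$ there and the cyclical inequality holds trivially whenever a swapped pair leaves $\Gamma_\PP$. The hypothesis $\PP\sim\mu\otimes\nu$ is then used, not to manufacture a rectangle, but through \cite[Lemma~4.3]{BeGoMaSc08} to make $(\Gamma_{\QQ^*},h'\circ d)$ \emph{connecting}; \cite[Proposition~3.2]{BeGoMaSc08} then produces Borel $\phi,\psi$ with $\phi(x)+\psi(y)\leq h'\circ d(x,y)$ for all $(x,y)$ and equality $\QQ^*$-a.s. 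Note also that the proof of Theorem~\ref{StructureOfOptimizers1stCase} does not proceed via such an inequality---it first shows $\PP\sim\QQ^*$ and then obtains exact equality on loops---so your pointer ``exactly as in the proof of Theorem~\ref{StructureOfOptimizers1stCase}'' is somewhat misleading; the present argument is genuinely the one tailored to the case where $\QQ^*$ and $\PP$ may fail to be equivalent.
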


{
We remark that uniqueness of an optimizer to Problem \eqref{P regularized transport} is guaranteed if $h$ is strictly convex. On the other hand, Conditions \eqref{eq:Kantorovich_potentials}-\eqref{eq:Kantorovich_potentials2} do not characterize optimizers even when these are unique (e.g.\ when $h'$ is not one-to-one).}

\subsection*{Comparison with the existing literature}
Minimization problems of the form \eqref{P regularized transport} have been studied for a long time, the most notable example being the Schr\"odinger problem. Indeed, analogues of Theorem \ref{StructureOfOptimizers1stCase} for the case where $h(x)=x \log x$ have been obtained in seminal works of Fortet and Beurling \cite{fortet1940resolution,beurling1960automorphism}. In more recent works, Borwein and Lewis \cite{borwein1992decomposition} and Borwein, Lewis and Nussbaum \cite{borwein1994entropy} proposed an approach to entropy minimization that combines fixed point-arguments and convex optimization techniques. We refer to Gigli and Tamanini's article \cite{gigli2018second} for adaptations of these results to the setting of $RCD$ spaces. Convex duality is also at the heart of the proof strategy of Pennanen and Perkki\"o \cite{pennanen2019convex}. A different viewpoint is adopted by R\"uschendorf and Thomsen \cite{ruschendorf1993note}: therein the shape of the optimal measure is found as a consequence of the closedness property of sum spaces of integrable functions. We also refer to Carlier and Laborde \cite{carlier2020differential} for multidimensional generalizations. A large part of the above mentioned results is surveyed by L\'eonard in \cite{Le14}. This author  has also proven shape theorems for the Schr\"odinger problem analogous to Theorem \ref{StructureOfOptimizers1stCase} in \cite{leonard2001minimizers,leonard2010entropic}. Cattiaux and Gamboa \cite{cattiaux1999large} treat the more general case when $h$ is the log-Laplace transform of a probability measure: this condition implies that $h$ is convex. However, it is not assumed there (unlike what we do here) that $\mathbb{P}\sim\mu\otimes\nu$, but only $\mathbb{P}\ll\mu\otimes\nu$ is needed. Their proofs rely essentially on ideas and tools coming from large deviations and on  the earlier findings of \cite{ruschendorf1993note}. To the best of our knowledge, the case when $h$ is not convex has not been treated before the present article. {As for Lemma \ref{SpecialMP}, a more explicit version in the particular case of the classical Schr\"odinger Problem has been obtained in parallel by Bernton, Ghosal and Nutz in \cite{BeGhNu21}, where it is furthermore leveraged to obtain stability and large deviations estimates.}

\quad

We now study the converse direction: how structure of a measure implies optimality. Here we do need to assume convexity.

\begin{theorem}\label{converse}
Let $h:[0,\infty) \to (-\infty, \infty)$ be strictly convex, lower-bounded, and continuously differentiable, $\lim_{x\to 0}h'(x)=0$, { $\lim_{x\to +\infty}h'(x)=+\infty$, and $h(2x)\leq ah(x)+bx+c $ for constants $a,b,c$}. Suppose that $\QQ^*\in\cpl(\mu, \nu)$ is absolutely continuous with respect to $\PP$, with
\begin{align*}\textstyle h'\circ\frac{\de\QQ^*}{\de\PP}(x,y) =  (\phi(x)+\psi(y))_+ ,\,\,\, \PP-\mbox{a.s.}\end{align*}
for measurable $\phi:{X}\to [-\infty,+\infty)$ and $\psi:{Y}\to [-\infty,+\infty)$. Then $\QQ^*$ is optimal for \eqref{P regularized transport}. 	
\end{theorem}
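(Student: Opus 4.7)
The strategy is a direct convex--duality argument: use the pointwise tangent--plane inequality for $h$, substitute the shape condition $h'(\rho^*)=(\phi+\psi)_+$, and exploit the marginal constraints. Writing $\rho^*:=\de\Q^*/\de\PP$ and $\rho:=\de\Q/\de\PP$ for an arbitrary competitor $\Q\in\cpl(\mu,\nu)$ with $G_h(\Q)<+\infty$ (otherwise the desired inequality is trivial), and normalising $h(0)=0$ so that $h\geq 0$ (possible since $h$ is lower bounded and $h'(0)=0$ makes $0$ the minimiser of $h$), the tangent--plane inequality
$$h(\rho)\;\geq\; h(\rho^*)+h'(\rho^*)(\rho-\rho^*) \qquad \PP\text{-a.s.}$$
integrated against $\PP$ reduces the whole theorem to the single estimate
$$\int(\phi+\psi)_+\,\de\Q \;\geq\; \int(\phi+\psi)_+\,\de\Q^*.$$

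To establish this estimate I would use the shape hypothesis. Since $h'$ is strictly increasing with $h'(0)=0$, we have $\{\rho^*>0\}=\{\phi+\psi>0\}$ up to a $\PP$-null set, and $\Q^*$ is concentrated on $\{\rho^*>0\}$, so $(\phi+\psi)_+=\phi+\psi$ holds $\Q^*$-a.s. Combined with the marginal identities this yields the chain
\begin{align*}
\int(\phi+\psi)_+\,\de\Q^*
 &= \int(\phi+\psi)\,\de\Q^*
 = \int\phi\,\de\mu+\int\psi\,\de\nu \\
 &= \int(\phi+\psi)\,\de\Q
 \leq \int(\phi+\psi)_+\,\de\Q,
\end{align*}
which is exactly what is needed.

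The main technical obstacle is to make this formal chain rigorous in extended reals; in particular, the implicit cancellation of $\int(\phi+\psi)_+\,\de\Q^*$ between the two sides requires that integral to be finite. This is precisely where the growth hypothesis $h(2x)\leq ah(x)+bx+c$ enters. Combined with the elementary convexity bound $x h'(x)\leq h(2x)-h(x)$, it yields the pointwise estimate
$$h'(\rho^*)\rho^* \leq (a-1)\,h(\rho^*)+b\,\rho^*+c,$$
ensuring $\int(\phi+\psi)_+\,\de\Q^*=\int h'(\rho^*)\rho^*\,\de\PP<+\infty$ whenever $G_h(\Q^*)<+\infty$, while the analogous integral against $\Q$ is then controlled via Young's inequality $h'(\rho^*)\rho\leq h(\rho)+h'(\rho^*)\rho^*-h(\rho^*)$. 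Finiteness of $G_h(\Q^*)$ itself, in the non-trivial case where some competitor has finite cost, should follow from a short interpolation argument between $\Q^*$ and such a competitor, again exploiting the growth condition to dominate $h(\rho_\lambda)$. Once these integrability checks are in place, the displayed computation is rigorous and delivers $G_h(\Q^*)\leq G_h(\Q)$.
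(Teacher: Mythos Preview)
Your approach shares the paper's core idea --- exploit convexity to compare $G_h(\Q)$ with $G_h(\Q^*)$ through the potentials $\phi,\psi$, and use the growth condition $h(2x)\le ah(x)+bx+c$ together with $xh'(x)\le h(2x)-h(x)$ to secure $\int h'(\rho^*)\rho^*\,\de\PP<\infty$. The tangent inequality you invoke is just the Fenchel--Young inequality at the dual point $h'(\rho^*)$, so the two arguments are variants of the same duality computation.

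The genuine gap lies in the line
\[
\int(\phi+\psi)\,\de\Q^* \;=\; \int\phi\,\de\mu+\int\psi\,\de\nu \;=\; \int(\phi+\psi)\,\de\Q.
\]
Knowing that $\int(\phi+\psi)_+\,\de\Q^*<\infty$ says nothing about the \emph{separate} integrability of $\phi$ against $\mu$ and $\psi$ against $\nu$: it is perfectly possible that $\int\phi^+\,\de\mu=\int\phi^-\,\de\mu=+\infty$, in which case neither middle term is defined and the marginal swap is meaningless. The paper deals with exactly this obstacle by truncating to bounded $\phi_n=(-n)\vee\phi\wedge n$, $\psi_n=(-n)\vee\psi\wedge n$, performing the marginal swap at the bounded level (where it is trivial), and then passing to the limit via monotone convergence --- the observation from \cite{ScTe09} that $\phi_n+\psi_n\nearrow\phi+\psi$ on $\{\phi+\psi\ge0\}$ and $\phi_n+\psi_n\searrow\phi+\psi$ on $\{\phi+\psi\le0\}$ is what makes this limit tractable.

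A second, related issue is your proposed ``interpolation argument'' for $G_h(\Q^*)<\infty$. I do not see how convex interpolation between $\Q^*$ and a finite-cost competitor, combined with the growth bound, yields an estimate on $h(\rho^*)$ that does not already presuppose integrability of $h(\rho^*)$. The paper's route sidesteps this entirely: working with the conjugate $h^*$ and truncated potentials produces the inequality $I\ge\int h(\rho^*)\,\de\PP$ directly, so finiteness of $G_h(\Q^*)$ is an \emph{output} rather than an input. Once you insert the truncation step your outline becomes essentially the paper's proof.
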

With the same techniques used to prove Theorem \ref{converse}, variants of this result can be established if $h'(0)\in [-\infty, \infty)$. {This covers in particular the 
Schr\"odinger problem, and relatives thereof, for which the converse direction is contained in Theorem \ref{StructureOfOptimizers1stCase}. As a side remark, we also want to stress that Theorem \ref{StructureOfOptimizers2ndCase} can be plainly }adapted to cover the case $h'(0)\in (-\infty, \infty) $.

\subsection*{A Toy Example: Schr\"odinger Problem with Congestion} 
Let us say $x\in X$ and $y\in Y$ denote respectively origins and destinations for car users in a city. Hence an origin-destination pair $(x,y)$ can stand for the route that a car has to travel from $x$ to $y$. Experts have determined that $\P\in\mathcal P( X\times Y)$ is the optimal use of the road network (here $\P(\de x,\de y)$ is the infinitesimal proportion of cars taking route $(x,y)$) in the stationary case. However, the actual proportion of car trip origins and car trip destinations are described by $\mu\in\mathcal P( X)$ and $\nu\in\mathcal P( Y)$ respectively, rather than $\text{proj}^{ X}(\P)$ and $\text{proj}^{ Y}(\P)$. In the vanilla version of the Schr\"odinger Problem we aim to determine a minimizer $\Q^*$ of the relative entropy $\textstyle\int  \frac{\de \Q}{\de \P}\log\Big( \frac{\de \Q}{\de \P}\Big)\de \P$ over $\Q\in\cpl(\mu,\nu)$, $\Q\ll\P$, amounting to the distribution of car trips compatible with the experts' guess $\P$ and the marginal information $\mu$ and $\nu$. However, we may also want to consider congestion effects, codified by an added term $f\Big( \frac{\de \Q}{\de \P}\Big)$ with $f(\cdot)$ increasing, the idea being that adding traffic above the experts' recommendation should be more costly than the opposite. This way we arrive at the non-convex Schr\"odinger-type problem of minimizing $\textstyle \int \Big [ \frac{\de \Q}{\de \P}\log\Big( \frac{\de \Q}{\de \P}\Big)+ f\Big( \frac{\de \Q}{\de \P}\Big) \Big ]\de \P$ under the same constraints. The optimality condition in Theorem \ref{StructureOfOptimizers1stCase} now reads:
$$\textstyle(\log+f')\Big( \frac{\de \Q^*}{\de \P}\Big)=\phi(x)+\psi(y),$$
from which $\Q^*$ can even be determined depending on the choice of $f$.

{\subsection*{Some perspectives on the mean field Schr\"odinger problem}

In the recent article \cite{backhoff2020mean} a mean field version of the Schr\"odinger problem has been introduced. A simplified discrete-time version of it consists in finding the most likely evolution conditionally to observations at initial and terminal times of the particle system $(X^i_{t})_{i=1,\ldots,N;\, t=0,1,2}$ where $(X^1_0,\ldots,X^n_0)$ are i.i.d. samples from a probability measure $\mu$ on $\mathbb{R}^d$ and

 \begin{equation}\label{toyMFSP-part sys}
 X^i_{t+1}-X^i_t = - \sum_{j\leq N} \nabla W(X^i_t-X^j_t) +  \xi^i_t, \quad i=1,\ldots,N, \,\, t=0,1. 
 \end{equation}
  Here the random variables $(\xi^i_t)_{i=1,\ldots,N; t=1,2}$ are i.i.d.\ standard Gaussians. The large deviations rate function for the empirical distribution of the particle system \eqref{toyMFSP-part sys} in the regime $N\rightarrow +\infty$ is known explicitly (see \cite{DaGae87} for a general result in continuous time and \cite{fischer2014form} for the analysis of the toy model \eqref{toyMFSP-part sys}) and leads to the following problem formulation
\begin{equation}\label{toyMFSP}
    \inf\left\{\int h\left (  \frac{\de\QQ}{\de R(\QQ)}(x_0,x_1,x_2) \right )\, R(\QQ)(\de x_0,\de x_1,\de x_2): \QQ\in \cpl(\mu, \nu)  \right\}.
\end{equation}

In the above we denoted $h(x)=x\log x$ and, adapting the convention used throughout this paper, we denoted by $\cpl(\mu, \nu)$ the subset of $\mathcal{P}(\mathbb{R}^d\times\mathbb{R}^d\times\mathbb{R}^d)$ whose first marginal $(t=0)$ is $\mu$ and whose last $(t=2)$ marginal is $\nu$. Finally, for a given $\QQ$, $R(\QQ)\in\mathcal{P}(\mathbb{R}^d\times\mathbb{R}^d\times\mathbb{R}^d)$ is defined as the law of the controlled discrete stochastic differential equation

\begin{equation}\label{toy-controlledSDE}
\begin{cases}
Z_{t+1} = Z_t -\int \nabla W (Z_t-x_t) \QQ(\de x_0,\de x_1,\de x_2) + \xi_t,\quad t=0,1,\\
Z_0 \sim \mu,
\end{cases}    
\end{equation}
where $(\xi_0,\xi_1,\xi_2)$ are i.i.d.\ standard Gaussians. Despite several analogies with \eqref{P regularized transport}, including the fact that the function $R(\cdot)$ naturally introduces non-convexity into the problem, the analysis of \eqref{toyMFSP} is outside the reach of this work, essentially because the ``reference'' measure $R(\QQ)$ depends on $\QQ$. However, the heurisitcs put forward in the introduction based on the linearization procedure still apply and leads to natural conjectures on the kind of monotonicity principle and shape theorem for optimizers to be expected in this situation. For this reason, the present work is a first step in the direction of developing and exploiting ever more powerful monotonicity principles. One of the main motivations for validating such conjectures for Problem \eqref{toyMFSP} resides in the fact that a shape theorem for the mean field Schr\"odinger problems yields existence of solutions for the coupled Fokker Planck-Hamilton Jacobi Bellman system describing the dynamics of mean field Schr\"odinger bridges. We redirect the interested reader to \cite[Sec 1.3]{backhoff2020mean} for the precise form of such PDE system as well as for more explanations.}

\section{Proofs}

\subsection{Proof of the Non-linear Monotonicity Principle: Lemma \ref{SpecialMP}} \label{sec:proof_lem_SpecialMP}
The proof requires two preliminary results. The first is a lemma telling essentially that, if $G=G_h$ directional derivatives can be computed with
$$\textstyle\delta G_{\QQ}(\omega) = h'\left (  \frac{\de\QQ}{\de\PP}\right )(\omega) .$$
More precisely, we will need this in the form of the following lemma: 
 \begin{lemma}\label{GradToSign} Let $h$ satisfy the hypotheses of Lemma \ref{SpecialMP}. Consider now a probability measure $\Q$ and positive measures $\theta, \theta' $ satisfying
\begin{enumerate}[(i)]
\item $\theta(\Omega)= \theta'(\Omega)$.
\item $\textstyle\int h\left(\frac{\de\Q}{\de\P}\right)\, d\P$ exists and is finite.
\item $\theta \leq \Q, \theta' \leq \P$.
\item There is a constant $l\in \R$ such that $-l\leq h'\left(\frac{\de\Q}{\de\P}\right)\leq l$ hold $\theta + \theta'$-a.s.
\item $\textstyle\int h'\left(\frac{\de\Q}{\de\P}\right)\, \de(\theta'-\theta) < 0$.
\end{enumerate}
Setting $\QQ_\epsilon:= \Q+\epsilon(\theta'-\theta)$ we then find that, for all $\epsilon > 0$ small enough, $\int h\left(\frac{\de\QQ_\epsilon}{\de\P}\right)\, \de\P$ exists and $$\textstyle\int h\left(\frac{\de\QQ_\epsilon}{\de\P}\right)\, \de\P < \int h\left(\frac{\de\Q}{\de\P}\right)\, \de\P.$$
\end{lemma}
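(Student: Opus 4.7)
The plan is to regard $\epsilon\mapsto \phi(\epsilon) := \int h\bigl(d\QQ_\epsilon/d\P\bigr)\,d\P$ as right-differentiable at $\epsilon=0$ with one-sided derivative equal to the expression in $(v)$, hence strictly negative; a first-order expansion then delivers the strict inequality for small $\epsilon>0$. First I would check that $\QQ_\epsilon$ is a probability measure absolutely continuous with respect to $\P$ for every $\epsilon\in[0,1]$: condition $(i)$ preserves the total mass, and $(iii)$ gives $\theta\ll\Q\ll\P$ and $\theta'\ll\P$; writing $\rho:=d\Q/d\P$, $s:=d\theta/d\P$ and $r:=d\theta'/d\P$, one has $s\leq\rho$ and $r\leq 1$ $\P$-a.s., so $\rho_\epsilon:=d\QQ_\epsilon/d\P=\rho+\epsilon(r-s)\geq(1-\epsilon)\rho\geq 0$.

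The hard part is confining $\rho$ (and hence $\rho_\epsilon$) to a compact subinterval of $[0,\infty)$ on the set $A:=\{r+s>0\}$, and this is where condition $(iv)$ enters decisively. The bound $h'(\rho)\leq l$ combined with $h'(x)\to+\infty$ as $x\to\infty$ forces a uniform upper bound $\rho\leq\rho_{\max}$ on $A$. In the sub-case $\lim_{x\downarrow 0}h'(x)=-\infty$, the bound $h'(\rho)\geq -l$ additionally forces $\rho\geq\rho_{\min}>0$ on $A$; in the alternative sub-case where $h'$ is globally lower bounded, no positive lower bound is required since $h'$ extends continuously to $0$ by $(H)$. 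Consequently for all sufficiently small $\epsilon$ the density $\rho_\epsilon$ takes values in a compact set $K\subset[0,\infty)$ on $A$ (bounded away from $0$ in the first sub-case), and $h$ and $h'$ are continuous hence bounded on $K$.

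With this in hand the conclusion is routine dominated convergence. Off $A$ one has $\rho_\epsilon=\rho$ and the integrand of $\epsilon^{-1}(h(\rho_\epsilon)-h(\rho))$ vanishes; on $A$ the mean value theorem yields $\epsilon^{-1}(h(\rho_\epsilon)-h(\rho))=h'(\xi_\epsilon)(r-s)$ for some $\xi_\epsilon\in K$, dominated by $\bigl(\sup_K|h'|\bigr)(r+s)$, a $\P$-integrable function since $\int(r+s)\,d\P=\theta(\Omega)+\theta'(\Omega)<\infty$. Together with $(ii)$ this simultaneously ensures finiteness of $\phi(\epsilon)$ for small $\epsilon$ and, by dominated convergence,
\[
\lim_{\epsilon\searrow 0}\frac{\phi(\epsilon)-\phi(0)}{\epsilon}=\int h'(\rho)(r-s)\,d\P=\int h'\!\left(\tfrac{d\Q}{d\P}\right)d(\theta'-\theta)<0
\]
by $(v)$, so $\phi(\epsilon)<\phi(0)$ for all sufficiently small $\epsilon>0$.
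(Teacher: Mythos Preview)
Your proof is correct and follows the same skeleton as the paper's: use condition (iv) together with the blow-up of $h'$ at $+\infty$ (and at $0$ in the first sub-case) to trap $\rho$ and $\rho_\epsilon$ in a fixed compact $K$ on the set where $\theta+\theta'$ lives, and then pass to the limit by dominated convergence.

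The difference is in how the first-order expansion is organized. The paper applies the second-order lower bound coming from $h''\ge C$ twice, obtaining inequalities of the form
\[
h(\rho)-h(\rho_\epsilon)\ \ge\ \epsilon\, h'(\rho_\epsilon)\,(s-r)-\tfrac{C\epsilon^2}{2}(r-s)^2,
\]
and then proves $\int h'(\rho_\epsilon)\,d\gamma\to\int h'(\rho)\,d\gamma$ for $\gamma=\theta,\theta'$. You instead invoke the mean value theorem directly, writing $\epsilon^{-1}\bigl(h(\rho_\epsilon)-h(\rho)\bigr)=h'(\xi_\epsilon)(r-s)$ with $\xi_\epsilon\in K$, and dominate by $(\sup_K|h'|)(r+s)$. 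Your route is a little more streamlined and, notably, never uses the semiconvexity hypothesis $h''\ge C$; the paper's organization makes that assumption do real work in the Taylor inequality, whereas in your argument only the continuity of $h'$ on $K$ matters. Both arguments rely on exactly the same compactness step, which is the heart of the lemma.
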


\begin{proof}
If $0\leq \epsilon\leq 1$ then $\QQ_\epsilon$ is {by (i) and (iii)} a probability measure. By hypothesis {$h''\geq C$ we have}
\begin{equation}\label{eq:GradtoSign1}\textstyle h\left(\frac{\de\QQ_\epsilon}{\de\P}\right) \geq h\left(\frac{\de\Q}{\de\P}\right)+\epsilon h'\left(\frac{\de\Q}{\de\P}\right)\frac{\de(\theta'-\theta)}{\de\P}- \frac{\epsilon^2 C}{2}\left(\frac{\de(\theta'-\theta)}{\de\P}\right)^2,
\end{equation}
Combining (iii) and (iv)  we get

\begin{align*}
 \textstyle \sup_{ \mathcal{X}\times\mathcal{Y}}\left | \frac{\de(\theta'-\theta)}{\de\P} \right |&\leq \textstyle\sup_{\supp(\theta) \cup\supp(\theta') }   \frac{\de\theta'}{\de\P}+ \frac{\de\theta}{\de\P} \\
  &\leq \textstyle 1+\sup_{ \supp(\theta)}  \frac{\de\QQ}{\de\P} \leq 1+\sup \, (h')^{-1}([0,l])<+\infty,
\end{align*}
where to obtain the last inequality we used that $ \lim_{x\rightarrow +\infty} h'(x)=+\infty$. Using this result in \eqref{eq:GradtoSign1} shows that $\textstyle\int h\left(\frac{\de\QQ_\epsilon}{\de\P}\right)\, \de\P$ exists and belongs to $(-\infty,+\infty]$. Similarly,
\begin{align}\label{eq_conv_ineq_eps}\textstyle
h\left(\frac{\de\Q}{\de\P}\right) - h\left(\frac{\de\QQ_\epsilon}{\de\P}\right)\geq \epsilon h'\left(\frac{\de\QQ_\epsilon}{\de\P}\right)\frac{\de(\theta-\theta')}{\de\P} - \frac{\epsilon^2 C}{2}\left(\frac{\de(\theta'-\theta)}{\de\P}\right)^2.
\end{align}

Next, we observe that if we can prove that for $\gamma=\theta,\theta'$ we have
$$\textstyle\int h'\left(\frac{\de\QQ_\epsilon}{\de\P}\right)\de\gamma\to \int h'\left(\frac{\de\Q}{\de\P}\right)\de\gamma ,$$
then we obtain the conclusion dividing by $\epsilon$ on both sides in \eqref{eq_conv_ineq_eps}, integrating in $\de\PP$ and letting $\epsilon\rightarrow 0$. We only argue in the case when $\lim_{x\downarrow 0}h'(x)=-\infty$, the other case being simpler. In this case, condition (iv) implies that $\gamma$-a.s. $\de\Q/\de\P$ takes values in a compact set of $(0,+\infty)$. Using this last observation and (iii) we deduce that $\gamma$-a.s. $\de\Q_\epsilon/\de\P$, viewed as a function of $x,y$ and $\varepsilon$, takes its values in a compact set of $(0,+\infty)$ provided $\varepsilon$ is small enough. The desired conclusion follows by dominated convergence. 
\end{proof}
The second ingredient, towards the proof of Lemma \ref{SpecialMP}, is the following result from \cite{BeGoMaSc08},  which is a  consequence of a duality result by Kellerer \cite{Ke84}. {We recall that if $\alpha,\beta$ are two measures, we write $\alpha\leq \beta$ if $\alpha(A)\leq \beta(A)$ for all $A$ measurable sets. In the following, we denote by $p_i$ the projection onto the $i$-th coordinate of a product space, so that if $\eta$ is a measure on such product then $p_i(\eta)$ denotes its $i$-th marginal.}

\begin{lemma}[{\cite[Proposition 2.1]{BeGoMaSc08}}]\label{KellLemma} Let $(E_i, m_i), i \leq k$ be Polish probability spaces, and $M$ an  analytic\footnote{\cite[Proposition 2.1]{BeGoMaSc08} is stated only for Borel sets, but the same proof applies in the case where $M$ is analytic.  } subset of $E_1 \times \ldots \times E_k$, then one of the following holds true:\begin{enumerate}
\item[(i)] there exist $m_i$-null sets $M_{i}\subseteq E_i$ such that $M \subseteq \bigcup_{i=1}^k p_{i}^{-1} (M_{i})$, or
\item[(ii)] there is a measure $\eta $ on $E_1 \times \ldots \times E_k$ such that $\eta (M)>0$ and $p_{i}(\eta) \leq m_i$ for $i=1, \dots, k$.
\end{enumerate}
\end{lemma}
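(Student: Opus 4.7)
The plan is to prove this Kellerer-type dichotomy via a linear programming duality for vector measures on Polish products, followed by a two-case analysis. Introduce the value
\[
v(M) \,:=\, \sup\bigl\{\eta(M) : \eta \in \mathcal{M}_+(E_1\times\cdots\times E_k),\ p_i\eta \leq m_i \text{ for all } i\bigr\}
\]
and aim for the duality identity
\[
v(M) \,=\, \inf\Bigl\{\sum_{i=1}^k m_i(A_i) : A_i \in \mathcal{B}(E_i),\ M \subseteq \bigcup_i p_i^{-1}(A_i)\Bigr\}.
\]
Weak duality is immediate: for any admissible $\eta$ and any Borel cover $\{A_i\}$ of $M$, $\eta(M) \leq \sum_i \eta(p_i^{-1}(A_i)) \leq \sum_i m_i(A_i)$. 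With this identity in hand, conclusion (ii) corresponds exactly to ``$v(M)>0$ with the sup attained'', and conclusion (i) corresponds to ``the infimum $0$ is witnessed by a sequence that can be collapsed to null sets''.

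\textbf{Strong duality.} The nontrivial $\geq$ would be proved in three stages. First, assume each $E_i$ is compact and $M$ is closed: then the primal feasible set is weakly compact by Prokhorov, the map $\eta \mapsto \eta(M)$ is upper semi-continuous by the portmanteau theorem, so the sup is attained; a Hahn-Banach separation in $C(E_1\times\cdots\times E_k)$, applied to the convex cone generated by functions of the form $\sum_i f_i\circ p_i$ with $f_i \in C(E_i)_+$, yields the reverse inequality. Second, extend from closed to analytic $M$ by verifying that $v$ is a Choquet capacity on the analytic sets: monotonicity is clear, continuity from below under increasing sequences follows from tightness of near-optimal $\eta$'s, and continuity from above on decreasing sequences of compacts uses weak compactness; capacitability then gives $v(M) = \sup\{v(K) : K \subseteq M \text{ compact}\}$. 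Third, reduce from Polish to compact $E_i$ by exhausting each $E_i$ by an increasing sequence of compacts of $m_i$-measure approaching $1$ and passing to the limit.

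\textbf{Case split.} If $v(M) > 0$, the attainment already discussed furnishes a measure $\eta$ witnessing (ii). If $v(M) = 0$, strong duality supplies, for each $n \in \mathbb{N}$, Borel sets $A_1^n,\dots,A_k^n$ with $\sum_i m_i(A_i^n) < 2^{-n}$ and $M \subseteq \bigcup_i p_i^{-1}(A_i^n)$. Setting $M_i := \limsup_n A_i^n$, Borel-Cantelli yields $m_i(M_i) = 0$. For any $(x_1,\dots,x_k) \in M$, each $n$ admits an index $i_n$ with $x_{i_n} \in A_{i_n}^n$; since $(i_n)_n$ takes only finitely many values, some $i^\star \in \{1,\dots,k\}$ occurs infinitely often, forcing $x_{i^\star} \in M_{i^\star}$ and hence $(x_1,\dots,x_k) \in p_{i^\star}^{-1}(M_{i^\star})$. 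This delivers (i).

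\textbf{Principal obstacle.} The critical step is upgrading strong duality from the closed/Borel compact setting to analytic subsets of Polish products; it is precisely here that Choquet capacitability is indispensable, and any shortcut (e.g.\ a pure Hahn-Banach argument on the original spaces) is liable to fail because the analytic $\sigma$-algebra is not stable under the separation step. Once the capacity property of $v$ is secured, the remainder of the argument — attainment, Borel-Cantelli, and pigeonhole — is standard.
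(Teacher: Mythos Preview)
The paper does not prove this lemma at all: it is quoted as \cite[Proposition 2.1]{BeGoMaSc08} and described as ``a consequence of a duality result by Kellerer \cite{Ke84}''. So there is no proof in the paper to compare against.

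That said, your sketch is essentially a reconstruction of the Kellerer route that the cited reference invokes. The architecture --- set up the primal value $v(M)$, establish the cover duality, handle the closed/compact case first, push to analytic sets via Choquet capacitability, and then in the $v(M)=0$ branch use Borel--Cantelli together with the pigeonhole argument to upgrade ``covers of arbitrarily small total mass'' to ``covers by genuine null sets'' --- is correct and is exactly how the result is obtained in the literature. The Borel--Cantelli/pigeonhole step in particular is clean and complete as you wrote it.

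A couple of your intermediate steps are under-argued rather than wrong. First, in the compact/closed base case the phrase ``a Hahn--Banach separation \ldots\ yields the reverse inequality'' hides the actual work: one normally proves the functional Kellerer duality $\sup_\eta \int f\,\de\eta = \inf\{\sum_i \int g_i\,\de m_i : g_i\geq 0,\ \sum_i g_i\circ p_i \geq f\}$ for upper semicontinuous $f$, and then passes to the set version by taking $A_i=\{g_i\geq 1/k\}$ and using Markov's inequality; your formulation jumps straight to the set-valued dual without indicating this bridge. Second, the reduction from Polish to compact spaces by exhausting with inner compacts is fine in spirit but interacts with the capacitability step and deserves a line on why the two limits commute. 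These are routine technicalities, not conceptual gaps, and the references the paper cites fill them in.
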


All in all, we can prove Lemma \ref{SpecialMP} now:
\begin{proof}[Proof of Lemma \ref{SpecialMP}.]
Set $d:=\frac{d\Q^*}{d\P}$ and $c:=h'\circ d$.
  
We want to find finitely minimal sets $\Gamma_{\QQ^*}, \Gamma_\P$ supporting $\Q^*, \P$. To obtain this, it is sufficient to show that for each $l \in \NN$ there are sets $\Gamma_{\QQ^*}, \Gamma_\P $ of full $\Q^*$ / $\P$ measure such that:  for any finite measure $\alpha$ concentrated on at most $l$ points in $\Gamma_{\QQ^*}$ and satisfying $\alpha(\Omega) \leq 1$ as well as $ c_{\upharpoonright\supp \alpha} \leq l$, there  is no $c$-better competitor $\alpha'$ on at most $l$ points in $\Gamma_\P$ and satisfying  $ c_{\upharpoonright\supp \alpha'} \leq l$.  If we achieve this, we can just take the intersection over countably many such $\Gamma_{\QQ^*}, \Gamma_\P$.

Hence, fix $l$ and define $M$ the subset of $\Omega^l\times \Omega^l$ through
\begin{align*}
M &=  \{ ((z_{1}, \dots, z_{l}),(z_{1}', \dots, z_{l}')) \in \Omega^l \times \Omega^l:\\
 &\textstyle 
 \exists \text{ a  measure }\alpha \text{ on } \Omega, \alpha(\Omega) \leq 1, \text{supp }\alpha\subset\{ z_{1}, \dots, z_{l}\}, -l \leq c_{\upharpoonright\supp \alpha} \leq l , ~
\\  
&\textstyle\text{s.t.\ there is a $c$-better competitor }\alpha',  \text{supp }\alpha'\subset\{ z_{1}', \dots, z_{l}'\}, -l \leq c_{\upharpoonright\supp \alpha'} \leq l \}. 
\end{align*}

Note that $M$ is a projection of the set 
\begin{align*}
\begin{split}
\hat M = & \Big\{ (z_1, \ldots, z_l, \alpha_1, \ldots, \alpha_l, z_1', \ldots, z_l', \alpha_1', \ldots, \alpha_l',) \in \Omega^l \times \R_+^l \times \Omega^l\times \R_+^l:\\
& \textstyle\sum \alpha_i \leq 1, 
 \sum \alpha_i= \sum \alpha_i',  -l\leq c_{\upharpoonright\supp \alpha\cup \supp \alpha'} \leq l {\text{ where }\alpha:=\sum \alpha_i\delta_{z_i},\, \alpha':=\sum \alpha_i'\delta_{z_i'},} \\
& \textstyle\sum \alpha_i f(z_i)=\sum \alpha_i' f(z_i') \text{ for all $f\in \F$ },
 \sum \alpha_i c(z_i)>\sum \alpha_i' c(z_i')
\Big\}.
\end{split}
\end{align*}
The set $\hat M$ is Borel; this is immediate if $\F$ is countable, and otherwise follows from the well-known argument that $\F\subset C_b(\Omega)$ contains a separating sequence. Hence $M$ is an analytic set.

We apply Lemma \ref{KellLemma} to the $l$ copies of the spaces $(\Omega, \Q^*)$, $(\Omega, \P)$ and the set $M$. {To be precise, we take $E_i=\Omega$, $i=1,\dots 2l$, $m_i=\Q^*$ if $i\leq l$ and $m_i=\P$ otherwise. By Lemma \ref{KellLemma},} if (i) holds, then there are sets $N_1, N_2$ with $\Q^*(N_1)=\P(N_2)=0$ such that  $M\subseteq   N_1^l \times \Omega^l\cup \Omega^l\times N_2^l .$ {Indeed, noticing that the set $M$ must be symmetric in its first $l$ coordinates, and also on the remaining $l$ ones, we get that if a point is in $M$, then at least one of its first $l$ coordinates are in a given $\Q^*$-null set $N_1$, or one of the remaining $l$ coordinates are in a given $\P$-null set $N_2$.} We set
$\Gamma_{\QQ^*}:= \Omega \setminus N_1, \Gamma_\P :=  \Omega\setminus N_2$, which have full  $\Q^*$ / $\P$ measure respectively.  From the definition of $M$  it can be directly seen that $\Gamma_{\QQ^*},\Gamma_\P$ are as needed. 

If (i) does not hold, (ii) has to. Hence, let us derive a contradiction from it.

For $j \leq 2, i \leq l$, write $p^j_{i}$ for the projection of an element of $ \Omega^l\times  \Omega^l$ onto its $((j-1)\times l + i)$-th  component.
We may assume that the measure $\eta$ given by Point (ii) in Lemma \ref{KellLemma}  is concentrated on $M$, and also  fulfills $p^1_{i}(\eta) \leq \frac{1}{l} \Q^{*}, p^2_{i}(\eta) \leq \frac{1}{l} \P$ for $i= 1, \dots, l$.

We now apply Jankow -- von Neumann uniformization \cite[Theorem 18.1]{Ke95} to the set $\hat M$ to define a mapping
\begin{align*} 
M &\to \hat M\\
(z_1,\ldots,z_l, z'_1,\ldots,z'_l) &\mapsto \bigl( z_1,\ldots,z_l,\alpha_1(z, z'),\ldots, \alpha_l(z, z'),    
z'_1,\ldots,z'_l,\alpha_1'(z, z'),\ldots, \alpha_l'(z, z') \bigr) 
\end{align*} 
which is measurable with respect to the $\sigma$-algebra generated by the analytic subsets of $ \Omega^l\times  \Omega^l$ in the domain and the Borel $\sigma$-algebra of $\Omega^l \times \R_+^l \times \Omega^l\times \R_+^l$ in the range. In the above, we denoted $z_i$ resp.\ $z_i'$ the i-th coordinate of $z\in \Omega^l$ resp.\ $z'\in \Omega^l$. 
Setting
$$ \textstyle
\alpha_{(z, z')}:=  \sum_{i=1}^l \alpha_i(z, z') \delta_{z_i}, \,
\alpha'_{(z, z')}:=  \sum_{i=1}^l \alpha_i'(z, z') \delta_{z_i'},
$$ 

we thus obtain kernels   $(z,z')\mapsto \alpha_{(z,z')}$, $(z,z')\mapsto \alpha'_{(z,z')}$ from $\Omega^l\times \Omega^l$ with the $\sigma$-algebra  generated by its analytic subsets to $\mathcal P( \Omega)$ with its Borel sets. We use these kernels to define measures $\theta, \theta'$ on the Borel sets of $ \Omega$ through 
\begin{align*}\textstyle
\theta (B) =  \int \alpha_{(z,z')}(B) \, \de\eta (z,z'), \ \theta' (B) =  \int \alpha'_{(z,z')}(B) \, \de\eta (z,z').
\end{align*}
By construction $\theta \leq \Q^{*}$. Indeed we have,  
\begin{equation*}\textstyle
    \theta(B) \leq \sum_{i=1}^l \int \delta_{z_i}(B) \de\eta(z,z') = \sum_{i=1}^lp^1_i(B) \leq \QQ^*(B). 
\end{equation*}
Arguing similarly we obtain $\theta' \leq \P$. Moreover  $\theta' $ is  a $c$-better competitor of $\theta$. To see this, we first observe that for each $f\in \cF$ we have
\begin{align}\label{eq:monot_1}\textstyle
\int_{\Omega} f(\bar z) \; \de \theta'(\bar z) = \int\!\!\!\!\int f(\bar z) \; \de\alpha'_{(z,z')}(\bar z)\de \eta (z,z') = \int\!\!\!\!\int f(\bar z) \; \de\alpha_{(z,z')}(\bar z)\de\eta (z,z') = \int_{\Omega}  f(\bar z) \; \de\theta(\bar z), 
\end{align}
and similarly, since $c \leq l$, $(\theta+\theta')$-a.s.\ we obtain
\begin{align*}\textstyle
\int_{\Omega}  c(\bar z) \; \de \theta'(\bar z) = \int\!\!\!\!\int c(\bar z) \; \de\alpha'_{(z,z')}(\bar z)\de \eta(z,z') < \int\!\!\!\!\int c(\bar z) \; \de\alpha_{(z,z')}(\bar z)\de\eta(z,z') = \int_{\Omega}  c(\bar z) \; \de\theta(\bar z). 
\end{align*}
Therefore, since  $\int c \, \de 
(\theta'-\theta)<0$ we obtain from Lemma \ref{GradToSign} that if we set $\Q^{*}_\epsilon=\Q^{*}+\varepsilon(\theta'-\theta)$, then 
\[ \textstyle\int h\left(\frac{\de\QQ^{*}_\epsilon}{\de\P}\right)\, \de\P < \int h\left(\frac{\de\Q^{*}}{\de\P}\right)\, \de\P\]
for $\varepsilon$ small enough. Since \eqref{eq:monot_1} makes sure that $\Q^*_{\epsilon}\in \mathrm{Adm}$, we have derived a contradiction to the optimality of $\Q^{*}$.
\end{proof}

\subsection{Proof of Necessity: Theorems \ref{StructureOfOptimizers1stCase} and \ref{StructureOfOptimizers2ndCase} }

{In the coming proofs the assumption $\P\sim \mu\otimes \nu$ is used in the following form: we use $\mu\otimes\nu\ll\mathbb P$ to apply \cite[Lemma 4.3]{BeGoMaSc08} and $\mathbb P\ll \mu\otimes\nu$ to guarantee w.l.o.g.\ that, as we trim down certain sets in the product space $X\times Y$, their $X$- and $Y$-projections remain unaffected.  }

\begin{proof}[Proof of Theorem \ref{StructureOfOptimizers1stCase} ]
Let $\Gamma_{\QQ^*}, \Gamma_\PP$ be as in Lemma \ref{SpecialMP}. Passing to subsets if necessary we may assume that $\proj_{X}\Gamma_{\QQ^*}=X, \proj_{Y}\Gamma_{\QQ^*}=Y, \Gamma_{\QQ^*}\subseteq \Gamma_{\P}$. Apparently $\QQ^*\ll \PP$. Hence, shrinking $\Gamma_{\QQ^*}$ by an irrelevant $\P$-null set, we may assume that $\Gamma_{\QQ^*}\subset\{d(x,y)>0\}=\{ h'(d(x,y))>-\infty\}$, with $d:=\de\Q^*/\de\P$. In the present transport case the finitistic optimality property \eqref{MonotonicityCond} boils down to cyclical monotonicity, i.e.\ we find that for $(x_i, y_i) \in \Gamma_{\QQ^*}, {i\leq N}, x_{N+1}=x_0, (x_{i+1}, y_{i})\in \Gamma_\PP$ we have 
\begin{align}\label{Cmon}\textstyle
\sum_{i\leq N} h'\circ d(x_i, y_i) \leq \sum_{i\leq N} h'\circ d(x_{i+1}, y_i).
\end{align}
 We say that $x_i, y_i, i\leq N$ form a $(\Gamma_{\QQ^*}, \Gamma_\PP)$-path if $(x_i, y_i)\in \Gamma_{\QQ^*}$ for $i\leq N$, $(x_{i+1}, y_i)\in \Gamma_\PP$ for $i\leq N-1$. 

Based on the assumption $\P\sim \mu\otimes \nu$  we can apply \cite[Lemma 4.3]{BeGoMaSc08} with the cost function $c:=0$ on ${\Gamma_\PP}$ and $c:=+\infty $ otherwise, to obtain that there exist subsets $\tilde{X}\subset X$ and $\tilde{Y}\subset Y$ {with respectively full measure under $\mu$ and $\nu$, so $\tilde{\Gamma}_{\QQ^*}:=\Gamma_{\QQ^*} \cap (\tilde{X}\times \tilde{Y})$ has $\QQ^*$-full measure, and such that crucially} for any points $(x,y), (\bar x,\bar y)\in \tilde{\Gamma}_{\QQ^*}$ there exists a $(\tilde{\Gamma}_{\QQ^*}, \Gamma_\PP)$-path satisfying $(x_0, y_0)=(x,y)$ and $(x_N, y_N)=(\bar x, \bar y)$. Passing to subsets if necessary, we can w.l.o.g.\ assume that $\tilde{X}=X, \tilde{Y}=Y, \tilde{\Gamma}_{\QQ^*}=\Gamma_{\QQ^*}$. We use this to establish 
\begin{align}\label{dPos}
d(x,y)>0 \quad \mbox{for all} \quad (x,y)\in \Gamma_\PP.
\end{align}
To see this, pick an arbitrary point $(x,\bar y)\in \Gamma_\PP$ and points $\bar x,  y$ such that $(\bar x, \bar y), (x, y)\in \Gamma_{\QQ^*}$ and a $(\Gamma_{\QQ^*}, \Gamma_\PP)$-path $(x_0,y_0):= (x, y), (x_1,y_1), \ldots, (x_N, y_N):= (\bar x, \bar y)$ which connects these points. By \eqref{Cmon} we then have (with $x_{N+1}=x_0$)
\begin{align*}
&\textstyle\sum_{i\leq N} h'\circ d(x_i, y_i) \leq \sum_{i\leq N} h'\circ d(x_{i+1}, y_i)\\
\Leftrightarrow
&\textstyle\sum_{i\leq N} h'\circ d(x_i, y_i) \leq \sum_{i\leq N-1} h'\circ d(x_{i+1}, y_i) + h'\circ d(x, \bar y). 
\end{align*} Since the left-hand side is finitely valued and $h'\circ d (x_{i+1},y_i) < \infty$ for $i\leq N-1$ we obtain indeed  $h'\circ d (x,\bar y)>-\infty$. This establishes \eqref{dPos}. It follows that $\PP\sim \QQ^*$ and, by passing to subsets if needed, we can assume without loss of generality that $\Gamma_{\QQ^*}=\Gamma_\PP$. Next, we say that $(x_i, y_i), i\leq N$ form a $\Gamma_{\QQ^*}$-loop if $(x_i, y_i), (x_{i+1}, y_i)\in \Gamma_{\QQ^*}$ for $i\leq N$, where $ x_{N+1}:=x_0$. Note that for any $\Gamma_{\QQ^*}$-loop we have
\begin{align}\label{CeqLoop}\textstyle
\sum_{i\leq N} h'\circ d(x_i, y_i) = \sum_{i\leq N} h'\circ d(x_{i+1}, y_i);
\end{align}
to see this, apply \eqref{Cmon} twice, i.e.\ to the loop in the usual direction as well as to running the loop in the `reverse' direction. By \cite[Prop.\ 1]{Mi06b}, Condition \ref{CeqLoop} is necessary and sufficient to obtain functions $\phi, \psi$ satisfying 
\begin{align}
\label{eq_joint_sum}
h'\circ d(x,y)= \phi(x)+\psi(y),
\end{align}
for all $(x,y)\in \Gamma_{\QQ^*}$.


Fix $x_0\in X$, and observe that \eqref{eq_joint_sum} yields 
$$\textstyle\sum_{i\leq M} h'\circ d(x_{i+1}, y_i)-h'\circ d(x_{i}, y_i) = \phi(x)-\phi(x_0),$$
whenever $(x_i,y_i),(x_{i+1},y_i)\in \Gamma_{\QQ^*}$ for $i\leq M{\in\mathbb N}$ is such that $x_{M+1}=x$. In particular we have
\begin{align}\label{PhiRep} \textstyle
\phi(x)=\inf\left\{  \sum_{i\leq M} h'\circ d(x_{i+1}, y_i)-h'\circ d(x_{i}, y_i) +\phi(x_0) : (x_i,y_i),(x_{i+1},y_i)\in \Gamma_{\QQ^*}\, \text{for }i\leq M,\, x_{M+1}=x  \right\}.
\end{align}

The right-hand side of \eqref{PhiRep} is upper semi-analytic, hence $\phi$ is upper semi-analytic. {Indeed, if $M$ is fixed in the r.h.s.\ of \eqref{PhiRep}, then we would have the partial infimum of a jointly Borel function, which must be upper semi-analytic; this is also the case as we let $M\in\mathbb N$.} Of course \eqref{PhiRep} pertains if we replace the $\inf$ with a $\sup$, hence $\phi$ is also lower semi-analytic. Putting the two together, we find that $\phi$ is Borel. {(For the same reason that a set is Borel iff it and its complement are analytic (Suslin theorem) we have that $\phi$ must be Borel.)}
\end{proof}





\begin{proof}[Proof of Theorem \ref{StructureOfOptimizers2ndCase}]
The start of the proof is the same as the one of Theorem \ref{StructureOfOptimizers1stCase}: Let $\Gamma_{\QQ^*}, \Gamma_\P$ be as in Lemma \ref{SpecialMP}.
Apparently $\QQ^*\ll \PP$ and we may assume that $\Gamma_{\QQ^*}\subseteq \Gamma_\P$.    
Redefining $d:=\frac{\de\Q^*}{\de\P}$ on an irrelevant $\P$-null set we may assume that $d=+\infty$ {exactly} on $(X\times Y) \setminus \Gamma_\P$.
  
  As above, the finitistic optimality property amounts  to  cyclical monotonicity, i.e.\ we find that for $(x_i, y_i) \in \Gamma_{\QQ^*}, {i\leq N}, x_{N+1}=x_0$   we have
\begin{align}\label{Cmonbis}\textstyle
\sum_{i\leq N} h'\circ d(x_i, y_i) \leq \sum_{i\leq N} h'\circ d(x_{i+1}, y_i).
\end{align}
{Note that we do not have to assume $(x_i, y_{i+1})\in \Gamma_\PP$ since $h'\circ d(x_i, y_{i+1})=+\infty$ whenever $(x_i, y_{i+1}) \notin \Gamma_\PP$.} 
Passing to subsets if necessary we may assume that $\proj_{X}\Gamma_{\QQ^*}=X,\, \proj_{Y}\Gamma_{\QQ^*}=Y$.

We say that $x_i, y_i, i\leq N$ form a $(\Gamma, d)$-path if $(x_i, y_i)\in \Gamma$ for $i\leq N$, $d(x_{i+1}, y_i)< \infty$ for $i\leq N-1$. 

Based on the assumption $\P\sim \mu\otimes \nu$  we can apply \cite[Lemma 4.3]{BeGoMaSc08} (with the cost function $c= 0$ on $\Gamma_\PP$ and $+\infty$ else) to obtain the following:

There exist {respectively  full $\mu,\nu$ measure subsets $X_0\subset X, Y_0\subset Y$, so $\Gamma_0:=\Gamma_{\QQ^*} \cap (X_0\times Y_0)$ has $\QQ^*$-full measure,} such that for any points $(x,y), (\bar x,\bar y)\in \Gamma_0$ there exists a $(\Gamma_0, c)$-path satisfying $(x_0, y_0)=(x,y)$ and $(x_N, y_N)=(\bar x, \bar y)$. Of course, we immediately assume w.l.o.g.\ that $X_0=X, Y_0=Y, \Gamma_0=\Gamma_{\QQ^*}$.

In the terms of \cite{BeGoMaSc08} we would say that {$(\Gamma_{\QQ^*},h'\circ d)$} is \emph{connecting}. It then follows from \cite[Proposition 3.2]{BeGoMaSc08} that there exist Borel functions $\phi:X\to [-\infty, \infty), \psi:Y\to [-\infty, \infty)$ such that for all $x\in X, y\in Y$
$$\textstyle \phi(x)+\psi(y)\leq h' \circ d (x,y)$$
with equality holding $\Q^*$-a.s. { Hence we also have \begin{align*}\textstyle h'\circ\frac{\de\QQ^*}{\de\PP}(x,y) =  (\phi(x)+\psi(y))\,{\bf 1}_{\frac{\de\QQ^*}{\de\PP}(x,y)>0} = (\phi(x)+\psi(y))_+\,\, ,\,\,\, \PP-\mbox{a.s.}\end{align*} }
\end{proof}

\subsection{Proof of Sufficiency: Theorem \ref{converse} }


\begin{proof}[Proof of Theorem \ref{converse}]
If Problem \eqref{P regularized transport} has value $+\infty$ then there is nothing to prove. Hence, let $Zd\PP\in \cpl(\mu, \nu) $ with $I:=\int h(Z)\de\PP<\infty$.

Denote $h^*(y)=\sup_{x\geq 0}\{xy-h(x)\}$, and notice that under the assumptions on $h$ we have $h^*(y)=(h')^{-1}(y_+)y-h\circ(h')^{-1}(y_+) $.  

Introduce $\phi_n(x)=(-n)\vee \phi(x)\wedge n$, $\psi_n(x)=(-n)\vee \psi(x)\wedge n$. Clearly (c.f.\ \cite[Lemma 3]{ScTe09}), on $\{\phi+\psi\geq 0\}$ we have $0\leq \phi_n+\psi_n \nearrow \phi+\psi$, while on $\{\phi+\psi\leq 0\}$ we have $0\geq \phi_n+\psi_n \searrow \phi+\psi$. Since
$$h(Z)\geq (\phi_n+\psi_n)Z-h^*(\phi_n+\psi_n),$$
we find 
\begin{align*}
I& \textstyle\geq \int (\phi_n+\psi_n)Z\de\PP-\int h^*(\phi_n+\psi_n) \de\PP \\
&= \textstyle\int \phi_n\de\mu +\int \psi_n\de\nu-\int h^*(\phi_n+\psi_n) \de\PP \\
&= \textstyle\int (\phi_n+\psi_n)\frac{\de\QQ^*}{\de\PP}\de\PP-\int h^*(\phi_n+\psi_n) \de\PP .
\end{align*}
Since $h^*(\cdot)$ is increasing, we have by monotone convergence $$\textstyle\int_{\phi+\psi\geq 0} h^*(\phi_n+\psi_n) \de\PP \to \int_{\phi+\psi\geq 0} h^*(\phi+\psi) \de\PP \text{, and }\int_{\phi+\psi\leq 0} h^*(\phi_n+\psi_n) \de\PP \to \int_{\phi+\psi\leq 0} h^*(\phi+\psi) \de\PP. $$
Since $h^*(\cdot)\geq -h(0)$ we can collect integrals and conclude
$$\textstyle\int h^*(\phi_n+\psi_n) \de\PP \to \int h^*(\phi+\psi) \de\PP , $$
and the right-hand side is $(-\infty,\infty]$-valued. 

By convexity \[ \textstyle h\Big( \frac{\de\QQ^*}{\de\PP}\Big)+\frac{\de\QQ^*}{\de\PP}h'\Big(\frac{\de\QQ^*}{\de\PP}\Big)\leq h\Big(2\frac{\de\QQ^*}{\de\PP}\Big),\] and since by assumption on $h$ the r.h.s.\ is $\PP$-integrable, we deduce that 
\[ \textstyle \int [\phi(x)+\psi(y)]_+ \de\QQ^*(x,y)= \int h'\Big(\frac{\de\QQ^*}{\de\PP}\Big)\de\QQ^*< +\infty.\]

In similar fashion as above, we get 
$$\textstyle \int (\phi_n+\psi_n)\frac{\de\QQ^*}{\de\PP}\de\PP \to \int (\phi+\psi)\frac{\de\QQ^*}{\de\PP}\de\PP,$$
since $\int_{\phi+\psi\geq 0} (\phi+\psi)\frac{\de\QQ^*}{\de\PP}\de\PP<\infty$ 
 and in particular $\int (\phi+\psi)\frac{\de\QQ^*}{\de\PP}\de\PP\in [-\infty,\infty)$. 
 
 Collecting integrals we have found 
\begin{align*}\textstyle
\int (\phi_n+\psi_n)\frac{\de\QQ^*}{\de\PP}\de\PP-\int h^*(\phi_n+\psi_n) \de\PP &\textstyle\to \int (\phi+\psi)\frac{\de\QQ^*}{\de\PP}\de\PP-\int h^*(\phi+\psi) \de\PP\\
& =  \textstyle\int \left [(\phi+\psi)\frac{\de\QQ^*}{\de\PP}- h^*(\phi+\psi)\right ] \de\PP \\
&\textstyle = \int \left [(\phi+\psi)(h')^{-1}\left( [\phi(x)+\psi(y)]_+ \right)- h^*(\phi+\psi)\right ] \de\PP\\
& \textstyle = \int h\circ (h')^{-1}\left( [\phi(x)+\psi(y)]_+ \right)\de\PP\\
&=\textstyle \int h\left(\frac{\de\QQ^*}{\de\PP} \right)\de\PP.
\end{align*}
We conclude that $I\geq \int h(\de\QQ^*/\de\PP)\de\PP$.
\end{proof}

\bibliography{joint_biblio_M}{}
\bibliographystyle{abbrv}

\end{document}